\DeclareMathOperator{\Aut}{Aut}
\DeclareMathOperator{\supp}{supp}
\DeclareMathOperator{\id}{id}
\DeclareMathOperator{\C}{\mathbb{C}}
\DeclareMathOperator{\Z}{\mathbb{Z}}
\DeclareMathOperator{\N}{\mathbb{N}}
\DeclareMathOperator{\h}{\mathcal{H}}
\DeclareMathOperator{\B}{\mathcal{B}}
\DeclareMathOperator{\Tr}{Tr}
\newtheorem{thm}{Theorem}
\newtheorem{lem}[thm]{Lemma}
\newtheorem{obs}[thm]{Observation}
\newtheorem{prop}[thm]{Proposition}
\newtheorem{rem}[thm]{Remark}
\newtheorem{cor}[thm]{Corollary}
\theoremstyle{definition}
\newtheorem{defn}[thm]{Definition}
\begin{document}

\title{von Neumann equivalence and group exactness}

\author{Bat-Od Battseren}
\address{RIMS, Kyoto University, 606-8502 Kyoto, Japan}
\curraddr{}
\email{batoddd@gmail.com}
\thanks{}


\dedicatory{}


\begin{abstract}
We will show that group exactness is a von Neumann equivalence invariant. This result generalizes the previously known fact stating that group exactness is stable under measure equivalence and W*-equivalence.
\end{abstract}

\maketitle
\tableofcontents

\section{Introduction}\label{sec intro}
In \cite{IPR19}, von Neumann equivalence (vNE)  was introduced on the class of discrete countable groups. This new equivalence relation is coarser than both Gromov's measure equivalence (ME) and W*-equivalence (W*E). In the same paper, it was proven that the group properties, amenability, Haagerup property (also known as a-T-menability), Kazhdan's property (T), and proper proximality, are stable under vNE. As its continuation, in \cite{Ish21}, it was proven that some other group properties such as (weak) amenability, (weak) Haagerup property, and Haagerup-Kraus' approximation property (AP) are stable under vNE. As group exactness has many connections with these group properties and is known to be preserved under both ME and W*E \cite{Oz07}, it is natural to ask if group exactness is a vNE invariant. 

The notion of group exactness originated from the theory of $C^*$-algebras \cite{KW99}. A discrete countable group $G$ is said to be \textit{exact} if its reduced group $C^*$-algebra $C^*_\lambda (G)$ is exact, or equivalently if its uniform Roe algebra $C^*_u (G)$ is nuclear \cite{Oz00}. Nowadays, many groups are known to be exact. To name some: all discrete countable linear groups over a field \cite{GHW05}, Gromov's hyperbolic groups \cite{Ada94}, relatively hyperbolic groups with respect to a collection of exact subgroups \cite{Oz06}, groups acting properly and cocompactly on a finite dimensional CAT(0) cubical complex \cite{CN05}, and groups with one relation \cite{Gue02}. To mention one of the most important applications of group exactness as a motivation, a discrete countable exact group $G$ is  coarsely embeddable into a Hilbert space and consequently satisfies the Novikov conjecture \cite{GK02} and the coarse Baum-Connes conjecture \cite{Hig00}.

Recall that two discrete groups $\Gamma$ and $\Lambda$ are measure equivalent (written $\Gamma \sim_{ME}\Lambda$) if there exists a standard measure space $(X,\mu)$ and a measure preserving action $\Gamma\times \Lambda \curvearrowright (X,\mu)$ such that each of the $\Gamma$ and $\Lambda$ actions is free and admits a finite measure fundamental domain. It was introduced in \cite{Gro93} as a measure theoretic analog of quasi-isometry (QI), another equivalence relation on groups that has more geometric spirit. It is worth mentioning that group exactness is also preserved by QI. Recall that two groups $\Gamma$ and $\Lambda$ are W*-equivalent if their group von Neumann algebras $L(\Gamma)$ and $L(\Lambda)$ are *-isomorphic. During the past years, many common invariant properties of ME and W*E are revealed, yet these two equivalence relations are distinct: The finite groups $\Z/2\Z$ and $\Z/3\Z$ are ME but not W*E. Whether ME is coarser than W*E is still open, but with a slight change in the definition of ME, one gets an equivalence relation coarser than both ME and W*E, that is vNE.

Our main result is the following theorem.
\begin{thm}\label{thm}
Let $\Lambda$ be a discrete countable group, and let $\Gamma$ be a $vNE$-subgroup of $\Lambda$. If $\Lambda$ is exact, then so is $\Gamma$. In particular, group exactness is stable under vNE.
\end{thm}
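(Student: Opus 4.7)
The plan is to transport exactness through the coupling by working at the level of a concrete analytic certificate, namely positive definite kernels with finite propagation. A convenient characterization (Kirchberg--Wassermann / Ozawa / Yu's property A for groups) is that $G$ is exact iff there exists a net of positive definite kernels $k_i : G \times G \to \mathbb{C}$ whose supports are contained in uniformly finite tubes $\{(s,t) : s^{-1}t \in F_i\}$ with $k_i \to 1$ uniformly on every fixed tube. Sako's proof of ME-invariance of exactness integrates such $\Lambda$-kernels over a $\Gamma$-fundamental domain using the orbit cocycle; I would implement an operator-algebraic analogue of this integration inside the vNE coupling.

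By the definition of vNE-subgroup, we have a semifinite tracial von Neumann algebra $(M, \tau)$ with commuting trace-preserving actions of $\Gamma$ and $\Lambda$, and a finite-trace projection $p \in M^\Lambda$ satisfying $\sum_{\gamma \in \Gamma} \gamma(p) = \mathbf{1}$. The ``integration over the fundamental domain'' becomes the positive linear functional $\tau(p \cdot -)$. Given a finite-propagation positive definite kernel $k_i^\Lambda$ on $\Lambda$, I would first realize it as an $M$-valued $\Lambda$-equivariant object (for instance, by combining its GNS-like representation with a $\Lambda$-equivariant embedding of the resulting data into $M$), and then define a candidate $\Gamma$-kernel
\[
k_i^\Gamma(\gamma_1, \gamma_2) \;=\; \tfrac{1}{\tau(p)}\; \tau\bigl( \gamma_1(p)\, \Theta_i\, \gamma_2(p) \bigr),
\]
or a close variant, where $\Theta_i \in M$ encodes the lifted kernel. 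Positive definiteness of $k_i^\Gamma$ should follow from positivity of $\tau$ and the construction, and the tiling identity $\sum_\gamma \gamma(p) = \mathbf{1}$ together with trace-invariance of both actions should convert the uniform approximation of $k_i^\Lambda$ on tubes into the analogous property for $k_i^\Gamma$.

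The main obstacle I anticipate is establishing \emph{finite propagation} of $k_i^\Gamma$. In the ME setting this is automatic from the pointwise orbit cocycle $\alpha : \Lambda \times D \to \Gamma$; in a general vNE-coupling, $M$ is noncommutative and $p$ has no pointwise support, so the ``cocycle'' must be extracted purely from the spatial relationship between the $\Gamma$-translates $\{\gamma(p)\}_{\gamma \in \Gamma}$ and the $\Lambda$-action on $M$. The technical crux will be a uniform estimate showing that, as a consequence of the finite trace of $p$ and the finite propagation of $k_i^\Lambda$, the value $k_i^\Gamma(\gamma_1, \gamma_2)$ vanishes (or is controllably small) unless $\gamma_1^{-1}\gamma_2$ lies in a uniformly finite subset of $\Gamma$; I expect this will be obtained by a careful compression argument inside $pMp$. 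Combined with the verification that the transfer intertwines the two commuting actions with the correct equivariance, this step is what I expect to constitute the real work of the proof.
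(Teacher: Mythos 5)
Your induction formula is essentially the right one --- the paper's induced kernel is $\hat{k}(\gamma,\beta)=\langle\hat{\xi}(\gamma),\hat{\xi}(\beta)\rangle$ with $\hat{\xi}(\gamma)=\sum_{s\in\Lambda}\sigma_{(\gamma,s)}(p)\,p\otimes\xi(s)$, which is a concrete instance of your $\tau(\gamma_1(p)\,\Theta_i\,\gamma_2(p))$ template, and positive definiteness together with the cb-norm bound do come for free from positivity of the trace. But the step you yourself flag as ``the technical crux'' --- extracting finite propagation of the induced $\Gamma$-kernel by a compression argument inside $pMp$ --- is a genuine gap, and it cannot be closed as stated. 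In a general vNE coupling there is no orbit cocycle: the only control on the overlaps $\Tr(\sigma_\beta(p)\sigma_s(p))$ is that they are summable over $s$ (because $\sum_s\sigma_s(p)=1$ and $\Tr(p)<\infty$), not that they eventually vanish, so the induced kernel is supported on no finite tube, and truncating it to one would destroy positive definiteness without any uniform error bound. (Also, a small but telling slip in your setup: $p$ is a fundamental domain for the $\Lambda$-action, i.e.\ $\sum_{s\in\Lambda}\sigma_s(p)=1$; it is neither $\Lambda$-invariant nor does it tile under $\Gamma$ --- that role belongs to the second projection $q$, which your outline never uses.)

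The paper resolves the difficulty with two ideas your proposal is missing. First, it replaces the finite-propagation characterization of exactness by a weaker one (Lemma \ref{Lem Property A eq def}): it suffices to produce kernels $K$ with uniformly bounded cb-norm, uniformly close to $1$ on prescribed tubes, and such that $\sum_{y}|K(x,xy)|^2$ converges uniformly in $x$ --- no finite support and no positive definiteness required; such kernels can then be truncated to finite tubes at the level of Schur multipliers on $C^*_\rho(\Gamma)$ with controlled cb-norm. Second, even this summability condition fails for the raw induced kernel and is obtained only after cutting by the $\Gamma$-fundamental domain: one passes to $\hat{\xi}_S(\gamma)=\sum_s\sigma_{(\gamma,s)}(p\,q_S)\,p\otimes\xi(s)$ with $q_S=\sum_{\gamma\in S}\sigma_\gamma(q)$, which forces $\sum_{\beta}\sup_{\gamma}|\hat{k}_S(\gamma,\gamma\beta)|^2\le|F|^2|S|<\infty$ while moving the kernel by an arbitrarily small amount in cb-norm. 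Without the weaker characterization and the $q$-truncation, your outline stalls exactly where you predicted it would.
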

\begin{cor}\label{cor}
Group exactness is stable under ME and W*E.
\end{cor}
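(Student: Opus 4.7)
The plan is to treat Corollary \ref{cor} as a formal consequence of Theorem \ref{thm}, combined with the fact, recalled from \cite{IPR19} in the introduction, that vNE is coarser than both ME and W*E. What actually needs to be verified is that these stronger equivalences produce the \emph{directional} vNE-subgroup relation required by the hypothesis of Theorem \ref{thm}, and that they do so in both directions so that the conclusion can be applied symmetrically.

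First I would handle the ME case. Suppose $\Gamma \sim_{ME} \Lambda$ via a commuting pair of free, measure-preserving actions of $\Gamma$ and $\Lambda$ on a standard measure space $(X,\mu)$, each admitting a finite-measure fundamental domain. The construction in \cite{IPR19} that turns such an ME-coupling into a vNE-coupling is manifestly symmetric in the two groups, so it should yield at once that $\Gamma$ is a vNE-subgroup of $\Lambda$ and that $\Lambda$ is a vNE-subgroup of $\Gamma$. The W*E case is analogous: an ambient $*$-isomorphism $L(\Gamma) \cong L(\Lambda)$ gives a natural, symmetric vNE-coupling (again via the translation in \cite{IPR19}), so the vNE-subgroup relation again holds in both directions.

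Once the two symmetry checks are in place, Theorem \ref{thm} applied twice, once in each direction, transfers exactness between $\Gamma$ and $\Lambda$ whenever they are ME or W*E. The only potential obstacle I anticipate is making the symmetry claim above precise in terms of the exact formulation of ``vNE-subgroup'' adopted earlier in the paper; however, because ME and W*E are already symmetric equivalence relations, this should amount to a direct bookkeeping check on the coupling rather than any substantive new argument, after which the corollary is immediate.
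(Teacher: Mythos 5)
Your proposal is correct and follows essentially the same route as the paper, which disposes of the corollary in one line by combining Theorem \ref{thm} with Observation \ref{lem EqRels} (ME and W*E imply vNE). Your extra symmetry check is sound but already built into the paper's definitions: the vNE relation requires both fundamental domains to have finite trace, so a vNE-coupling makes each group a vNE-subgroup of the other automatically, and Theorem \ref{thm} applies in both directions.
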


In Section \ref{sec prel}, we will recall some definitions and results from literature. In Section \ref{section proof}, we will provide a proof of Theorem \ref{thm}.

\section{Preliminaries}\label{sec prel}

\subsection{Group algebras}
Let $G$ be a discrete countable group. For $1\leq p < \infty$, denote by $\ell^p(G)$ the Banach space of $p$-power summable functions on $G$. Denote by $\ell^\infty (G)$ the Banach space of bounded functions on $G$. The \textit{left regular representation} $\lambda: G\rightarrow \mathcal{U} (\ell^2 (G))$ is defined as $\lambda_x f(y) = f(x^{-1}y)$ for all $x,y\in G$ and $f\in \ell^2 (G)$. The \textit{right regular representation} $\rho: G\rightarrow \mathcal{U} (\ell^2 (G))$ is defined as $\rho_x f(y) = f(yx)$ for all $x,y\in G$ and $f\in \ell^2 (G)$. The $C^*$-algebras generated by $\lambda(G)$ and $\rho(G)$ are called the \textit{left and right reduced group $C^*$-algebras} and denoted by $C^*_\lambda (G)$ and $C^*_\rho (G)$, respectively. The von Neumann algebras generated by $\lambda(G)$ and $\rho(G)$ are called the \textit{group von Neumann algebras} and denoted by $L(G)$ and $R(G)$, respectively. The map $\lambda_x\mapsto \rho_x$ extends to the *-isomorphisms  $C^*_\lambda (G)\cong C^*_\rho (G)$ and $L(G)\cong R(G)$. We also have the isometric *-homomorphism $\theta: \ell^\infty (G)\rightarrow \B (\ell^2(G))$ coming from pointwise multiplication. The $C^*$-algebra generated by $\lambda(G)\cup \theta (\ell^\infty(G))$ is called the  \textit{uniform Roe algebra} and denoted by $C^*_u (G)$.
\subsection{Semifinite von Neumann algebras}\label{subsec standard rep}
Let $M$ be a von Neumann algebra. A map  $\Tr: M_+ \rightarrow[ 0,+\infty ]$  is called a \textit{trace} if it satisfies 
\begin{enumerate}
\item $\Tr(x+y) = \Tr(x) + \Tr (y)$ for all $x,y\in M_+$,
\item $\Tr(kx) = k\Tr(x)$ for all $k \geq 0$ and $x\in M_+$ (conventionally $0\cdot (+\infty)=0$),
\item $\Tr (x^*x) = \Tr(xx^*)$ for all $x\in M$.
\end{enumerate}
A trace $\Tr$ on a von Neumann algebra $M$ is said to be
\begin{enumerate}
\item[•] \textit{faithful} if $\Tr(x) = 0$ implies $x=0$,
\item[•] \textit{normal} if $\sup_i \Tr(x_i) = \Tr(\sup_i(x_i)) $ for any bounded increasing net $(x_i)$ in $M_+$,
\item[•] \textit{semifinite} if for any $x\in M_+$ there exists $y\in M_+$ such that $y\leq x$ and $\Tr(y)< \infty$,
\item[•] \textit{finite} if $\Tr(1)<\infty$.
\end{enumerate} A von Neumann algebra is called \textit{semifinite} if there exists a faithful normal semifinite  trace on it. In the sequel, the couple $(M,\Tr)$  is always a semifinite von Neumann algebra endowed with a faithful normal semifinite  trace. The set
\begin{align*}
\mathfrak{n}_{\Tr} = \{ x \in M :\Tr(x^*x)<\infty \}
\end{align*} happens to be a two sided ideal of $M$ on which the trace extends linearly. Moreover, the sesquilinear form $(x,y)\in \mathfrak{n}_{\Tr} \mapsto \Tr(y^*x)
$ gives a faithful inner product on $\mathfrak{n}_{\Tr}$. Thus its closure with respect to the inner product gives a Hilbert space which we denote by $L^2(M,\Tr)$. There is a natural  faithful normal *-representation $M\rightarrow \B (L^2(M,\Tr))$ given by $x.a = xa$ for all $x\in M$ and $a\in \mathfrak{n}_{\Tr}$. This representation is called the \textit{standard representation} of $(M,\Tr)$.

As an example, suppose that $(M,\Tr)$ is  an abelian von Neumann algebra on a separable Hilbert space. In other words, $(M,\Tr) = (L^\infty (X,\mu),\int d\mu)$ for some standard measure space $(X,\mu)$. Then the standard representation is given by pointwise multiplication of $L^\infty (X,\mu)$ on the Hilbert space $L^2 (X,\mu)$ of square integrable functions .

Another example comes from the group von Neumann algebras. Suppose that $G$ is a discrete countable group and $M=L (G)$ is the group von Neumann algebra. Then for any two faithful normal finite  traces $\Tr_1$ and $\Tr_2$, the representations $L^2(M,\Tr_1)$ and $L^2(M,\Tr_1)$ are unitarily equivalent. This statement is true for even general case, namely for countably decomposable von Neumann algebras \cite[Theorem III.2.6.6]{Bla06}. The \textit{canonical trace} of $L(G)$ is defined  by $\tau (T) = \langle T\delta_e,\delta_e\rangle$ for all $T\in L(G)$. This is a faithful normal finite  trace. The map $\lambda_x \in L(G)\mapsto \delta_x\in \ell^2(G)$ extends to a unitary $L^2(L(G),\tau)\rightarrow \ell^2(G)$ that intertwines the standard representation and the left regular representation, so the standard representation of the group von Neumann algebra is nothing but the left regular representation.
\subsection{ME, W*E, and vNE}
\begin{defn}\label{defn ME}
Two discrete countable groups $\Gamma$ and $\Lambda$ are said to be \textit{measure equivalent} (ME) if there exists a standard measure space $(X,\mu)$ and two commuting, measure preserving, free actions $\Gamma\curvearrowright (X,\mu)$ and $\Lambda\curvearrowright (X,\mu)$ with finite measure fundamental domains. 
\end{defn}
Basic examples come from two lattices of a second countable locally compact group. Finite groups are ME to each other. A more complicated example is that countably infinite amenable groups make a single ME class \cite{OW80}.
\begin{defn}\label{defn W*E}
Two discrete countable groups $\Gamma$ and $\Lambda$ are said to be \textit{W*-equivalent} (W*E) if their group von Neumann algebras $L(\Gamma)$ and $L(\Lambda)$ are *-isomorphic.
\end{defn}
For example, any two ICC discrete amenable groups have *-isomorphic group von Neumann algebras called the hyperfinite $II_1$ factor \cite{Con76}.
\begin{defn} Let $G$, $\Lambda$, and $\Gamma$ be discrete countable groups. Let $(M,\Tr)$ be a semifinite von Neumann algebra endowed with a faithful normal semifinite trace. The group of trace preserving *-automorphisms of $M$ is denoted by $\Aut(M,\Tr)$. A group homomorphism $\sigma: G\rightarrow \Aut(M,\Tr)$ is called a \textit{$G$-action on $(M,\Tr)$}. A \textit{fundamental domain} for the action $\sigma$ is a projection $p\in M$ such that $\sum_{x\in G}\sigma_x (p)= 1$, where the sum converges in the strong operator topology. 
We say that $\Gamma$ is a \textit{von Neumann equivalence subgroup (or $vNE$-subgroup)} of $\Lambda$  if there exists an action
$\sigma: \Gamma\times \Lambda\rightarrow \Aut(M,\Tr)$ and fundamental domains $p$ and $q$ for each of $\Lambda$ and $\Gamma$ actions, respectively, such that the trace $\Tr(p)$ is finite. Furthermore, if the trace $\Tr(q)$ is finite, we say $\Lambda$ and $\Gamma$ are \textit{von Neumann equivalent} and write $\Lambda\sim_{vNE} \Gamma$.
\end{defn}
We restate the following two useful observations from \cite[p.3 and p.12]{IPR19}.
\begin{obs}\label{lem EqRels}
ME, as well as W*E, implies vNE.
\end{obs}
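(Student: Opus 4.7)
The plan is to verify the two implications ME $\Rightarrow$ vNE and W*E $\Rightarrow$ vNE separately, in each case by writing down an explicit semifinite von Neumann algebra $(M,\Tr)$ equipped with a commuting pair of $\Gamma$- and $\Lambda$-actions and finite-trace fundamental domains, directly matching the vNE definition.

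For the ME direction, I would let $(X,\mu)$ be the coupling space supplied by Definition~\ref{defn ME} and take $(M,\Tr)=(L^\infty(X,\mu),\int\cdot\,d\mu)$. Since $\mu$ is $\sigma$-finite (it is covered by countably many $\Gamma$-translates of the finite-measure fundamental domain), this trace is faithful, normal, and semifinite. The commuting measure-preserving actions $\Gamma,\Lambda\curvearrowright X$ lift to trace-preserving $*$-automorphisms $\sigma_{(g,h)}(f)(x)=f((g,h)^{-1}\cdot x)$, giving a $\Gamma\times\Lambda$-action on $(M,\Tr)$. For a measurable fundamental domain $F_\Gamma\subset X$, the projection $p=\chi_{F_\Gamma}$ has $\Tr(p)=\mu(F_\Gamma)<\infty$, and the partition $X=\bigsqcup_{g\in\Gamma}gF_\Gamma$ translates into $\sum_{g}\sigma_g(p)=1$ in the strong operator topology; the symmetric choice $q=\chi_{F_\Lambda}$ handles $\Lambda$, yielding $\Gamma\sim_{vNE}\Lambda$.

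For the W*E direction, I would start from a $*$-isomorphism $\pi:L(\Gamma)\to L(\Lambda)$ and invoke the uniqueness-of-standard-representation statement recalled in Section~\ref{subsec standard rep} to obtain a unitary $W$ implementing $\pi$ between $L^2(L(\Gamma),\tau_\Gamma)$ and $L^2(L(\Lambda),\tau_\Lambda)$. Identifying both via $W$ with a single Hilbert space $\h$, the subalgebras of $\B(\h)$ arising from the two standard representations coincide as a common algebra $L$, and hence so do their commutants $R=L'$. Taking $(M,\Tr)=(\B(\h),\Tr)$ with the canonical operator trace, I would set $\sigma_g=\Ad(\rho^\Gamma_g)$ and $\tau_h=\Ad(\lambda^\Lambda_h)$, where $\rho^\Gamma_g\in R$ and $\lambda^\Lambda_h\in L$ are the images of the right and left regular unitaries; the two actions commute because $L$ and $R$ do. Letting $\xi_\Gamma,\xi_\Lambda\in\h$ denote the images of $\delta_e\in\ell^2(\Gamma)$ and $\delta_e\in\ell^2(\Lambda)$, the rank-one projections $p$ onto $\C\xi_\Gamma$ and $q$ onto $\C\xi_\Lambda$ satisfy $\Tr(p)=\Tr(q)=1$, and since $\{\rho^\Gamma_g\xi_\Gamma\}_g$ and $\{\lambda^\Lambda_h\xi_\Lambda\}_h$ are orthonormal bases of $\h$, their $\sigma$- and $\tau$-orbits sum to the identity.

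The ME case is essentially a direct translation of the definition. The main step to scrutinize is the W*E case: once both standard representations are placed on a single $\h$, I need to verify that the right regular unitaries of $\Gamma$ and the left regular unitaries of $\Lambda$ really land in the mutually commuting pair $R$ and $L$, so that the two $\Ad$-actions commute. Once this identification is in hand, the fundamental-domain property reduces to the familiar fact that left (resp.\ right) translation acts simply transitively on the group.
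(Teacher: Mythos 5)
Your argument is correct. The paper itself offers no proof of this observation --- it is restated from \cite{IPR19} with a page reference --- so the relevant comparison is with the argument there. Your ME direction is the standard translation and coincides with the cited source: take $(M,\Tr)=(L^\infty(X,\mu),\int\cdot\,d\mu)$, which is semifinite because $X$ is covered by countably many translates of a finite-measure fundamental domain, and let $p,q$ be the characteristic functions of the two fundamental domains. Your W*E direction, however, is a genuine (and pleasant) simplification of the general route: in \cite{IPR19} one lets $\Gamma$ act on $\B(\ell^2(\Lambda))$ by conjugation with unitaries in $L(\Lambda)'$ and then invokes a separate lemma producing a fundamental domain for the conjugation action of $\Gamma$ on $\B(\h)$ for an \emph{arbitrary} normal $L(\Gamma)$-module $\h$ (by embedding $\h$ into an amplification of the standard module). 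By first putting both algebras in standard form you avoid that lemma entirely: the module becomes the standard one, and the fundamental domains are simply the rank-one projections onto $\C\xi_\Gamma$ and $\C\xi_\Lambda$, with $\sum_g \Ad(\rho^\Gamma_g)(p)=1$ following from $\rho^\Gamma_g\delta_e=\delta_{g^{-1}}$. The one step you should make explicit is how the unitary $W$ implementing $\pi$ is obtained from the uniqueness statement recalled in Section~\ref{subsec standard rep}: that statement compares two traces on the \emph{same} algebra, so one should factor $W$ as the unitary $L^2(L(\Gamma),\tau_\Gamma)\to L^2(L(\Gamma),\tau_\Lambda\circ\pi)$ given by that uniqueness, followed by the canonical unitary $L^2(L(\Gamma),\tau_\Lambda\circ\pi)\to L^2(L(\Lambda),\tau_\Lambda)$ induced by $\pi$. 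With that in place, $\{\rho^\Gamma_g\xi_\Gamma\}_{g\in\Gamma}$ and $\{\lambda^\Lambda_h\xi_\Lambda\}_{h\in\Lambda}$ are indeed orthonormal bases of the common Hilbert space, both traces $\Tr(p)=\Tr(q)=1$ are finite, and you even obtain $\Gamma\sim_{vNE}\Lambda$ rather than merely a vNE-subgroup relation.
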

Note that now Corollary \ref{cor}  follows directly from Theorem \ref{thm} and Observation \ref{lem EqRels}.
\begin{obs}
Suppose that we have a trace preserving action $\sigma:G\rightarrow \Aut(M,\Tr)$. If $p\in M$ is a fundamental domain for $\sigma$, then the map $\theta_p:\ell^\infty (G)\rightarrow M$, $\theta_p(f) = \sum_{x\in G} f(x)\sigma_{x^{-1}} (p)$ is a faithful normal *-homomorphism.
\end{obs}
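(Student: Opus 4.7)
The plan is to establish all four properties by first identifying that $\{\sigma_{x^{-1}}(p)\}_{x\in G}$ is a partition of unity into pairwise orthogonal projections, after which $\theta_p$ becomes the functional calculus map associated with this partition and the required structure follows mechanically.

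First I would verify pairwise orthogonality. Each $\sigma_{x^{-1}}(p)$ is a projection since $\sigma_{x^{-1}}$ is a $*$-automorphism; and for distinct $x,y\in G$, the inequality $\sigma_{x^{-1}}(p)+\sigma_{y^{-1}}(p)\leq\sum_{z\in G}\sigma_{z^{-1}}(p)=1$ gives $\sigma_{x^{-1}}(p)\leq 1-\sigma_{y^{-1}}(p)$; applying both sides to any $\xi\in\im(\sigma_{x^{-1}}(p))$ yields $\|\xi\|^2\leq\|\xi\|^2-\|\sigma_{y^{-1}}(p)\xi\|^2$, hence $\sigma_{y^{-1}}(p)\sigma_{x^{-1}}(p)=0$.

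Next, for $f\in\ell^\infty(G)$ I would define $\theta_p(f)$ as the SOT-limit of the finite partial sums $\sum_{x\in F}f(x)\sigma_{x^{-1}}(p)$. Orthogonality immediately gives
\begin{equation*}
\Big\|\sum_{x\in F}f(x)\sigma_{x^{-1}}(p)\xi\Big\|^2=\sum_{x\in F}|f(x)|^2\|\sigma_{x^{-1}}(p)\xi\|^2\leq\|f\|_\infty^2\|\xi\|^2
\end{equation*}
for every $\xi\in L^2(M,\Tr)$, so the net is Cauchy in SOT with limit in $M$ of norm at most $\|f\|_\infty$. Linearity is clear; multiplicativity follows by expanding $\theta_p(f)\theta_p(g)=\sum_{x,y}f(x)g(y)\sigma_{x^{-1}}(p)\sigma_{y^{-1}}(p)$ and collapsing to the diagonal via $\sigma_{x^{-1}}(p)\sigma_{y^{-1}}(p)=\delta_{x,y}\sigma_{x^{-1}}(p)$; and $\theta_p(\bar f)=\theta_p(f)^*$ follows from self-adjointness of each projection.

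For faithfulness: if $\theta_p(f)=0$, then right-multiplying by $\sigma_{y^{-1}}(p)$ and invoking orthogonality yields $f(y)\sigma_{y^{-1}}(p)=0$, and $\sigma_{y^{-1}}(p)\neq 0$ (otherwise $p=0$ and the fundamental-domain sum would be $0\neq 1$), so $f(y)=0$ for all $y$. For normality I would check $\sigma$-weak continuity by computing $\varphi\circ\theta_p(f)=\sum_{x\in G}f(x)\varphi(\sigma_{x^{-1}}(p))$ for any $\varphi\in M_*$; for positive $\varphi$ the function $x\mapsto\varphi(\sigma_{x^{-1}}(p))$ lies in $\ell^1(G)$ because $\sum_x\varphi(\sigma_{x^{-1}}(p))=\varphi(1)<\infty$ by normality of $\varphi$, and the general case follows by decomposition, so $\varphi\circ\theta_p$ is $\sigma$-weakly continuous on $\ell^\infty(G)$. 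The main obstacle is really just the pairwise-orthogonality step; once that partition-of-unity structure is in hand, the rest are routine consequences and $\theta_p$ is recognizable as the canonical functional calculus associated with the partition $\{\sigma_{x^{-1}}(p)\}_{x\in G}$.
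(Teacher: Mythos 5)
Your proposal is correct, and since the paper only restates this observation from \cite{IPR19} without proof, there is nothing to diverge from: your argument (pairwise orthogonality of the projections $\sigma_{x^{-1}}(p)$ from $\sum_x\sigma_{x^{-1}}(p)=1$, then boundedness, multiplicativity, injectivity, and normality via the $\ell^1(G)$ pairing with $x\mapsto\varphi(\sigma_{x^{-1}}(p))$) is exactly the standard one intended here. All steps check out, including the joint SOT-continuity of multiplication on bounded nets needed for multiplicativity and the passage from SOT to $\sigma$-weak limits for the normality computation.
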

\subsection{Completely bounded maps}
Let $A$ and $B$ be $C^*$-algebras.
A linear map $\phi : A\rightarrow B$ is called \textit{completely bounded} if the completely bounded norm (cb-norm in short)
\begin{align*}
\|\phi\|_{cb}= \sup_{n\in\N} \|\phi\otimes \id_n: A\otimes M_n(\C)\rightarrow B\otimes M_n(\C) \|
\end{align*} is finite. A linear map $\phi : A\rightarrow B$ is called \textit{positive} if it sends positive elements to positive elements, and \textit{completely positive} if the linear maps $\phi\otimes \id_n: A\otimes M_n(\C)\rightarrow B\otimes M_n(\C)$ are all positive. When $A$ is unital and $\phi:A\rightarrow B$ is positive, it is automatically continuous with norm $\|\varphi(1)\|$. Consequently, any completely positive map $\phi$ is completely bounded with cb-norm $\|\varphi(1)\|$. 

Let $X$ be a set, and $k:X\times X \rightarrow \C$ be a bounded map (also called a kernel on $X$).
The \textit{Schur multiplier} associated to the kernel $k$ is the map $S_k: \B(\ell^2(X))\rightarrow \B(\ell^2(X))$ defined by $\langle S_k(T)\delta_y,\delta_x\rangle = k(x,y)\langle T\delta_y,\delta_x\rangle $ for all $T\in \B(\ell^2(X))$ and $x,y\in X$. When $S_k$ is completely bounded, we also say the kernel $k$ is \textit{completely bounded}, and when $S_k$ is completely positive, we say the kernel $k$ is \textit{completely positive} or \textit{positive definite}. Of course for some kernels, the Schur multiplier could be not well defined. The following theorem  characterizes (completely) bounded and (completely) positive Schur multipliers. 
\begin{thm}[{\cite[Theorem 5.1]{Pis01}}]\label{prop cb cp char}
Let $X$ be a set, and $k:X\times X \rightarrow \C$ be a bounded kernel.
\begin{enumerate}
\item The Schur multiplier $S_k$ is  completely bounded with $\|S_k\|_{cb}< 1$ if and only if there is a Hilbert space $\h$ and bounded maps $\xi,\eta:X\rightarrow \h$ such that $k(x,y) = \langle \xi(x),\eta(y)\rangle$, $\|\xi(x)\|< 1$, and $\|\eta(y)\|<1$ for all $x,y\in X$.
\item The Schur multiplier $S_k$ is completely positive if and only if there is a Hilbert space $\h$ and a bounded map $\xi :X\rightarrow \h$ such that $k(x,y) = \langle \xi(x),\xi(y)\rangle$  for all $x,y\in X$.
\end{enumerate}
\end{thm}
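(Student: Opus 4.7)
The plan is to handle the two parts separately, with each ``if'' direction supplied by an explicit Hilbert-space factorization of $S_k$, and each ``only if'' direction extracted from a classical representation theorem.

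For the sufficiency direction of part (2), given a bounded $\xi:X\to\h$ with $k(x,y)=\langle \xi(x),\xi(y)\rangle$, I would define the bounded operator $V:\ell^2(X)\to\ell^2(X)\otimes\h$ by $V\delta_x=\delta_x\otimes\xi(x)$ and verify, by a direct matrix-entry computation, the identity $S_k(T)=V^*(T\otimes I_\h)V$ (the inner-product convention may force one to replace $\h$ by its conjugate to obtain the correct sign, but this is cosmetic). Since $T\mapsto V^*(T\otimes I_\h)V$ is manifestly completely positive, so is $S_k$. For the sufficiency direction of part (1), the same template with a pair $V\delta_x=\delta_x\otimes\xi(x)$ and $W\delta_y=\delta_y\otimes\eta(y)$ gives $S_k(T)=V^*(T\otimes I_\h)W$, hence $\|S_k\|_{cb}\le \|V\|\|W\|<1$.

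For the necessity direction of part (2), my first step is to deduce that $k$ is a positive definite kernel, i.e.\ the matrix $[k(x_i,x_j)]_{i,j=1}^{n}\in M_n(\C)$ is positive semidefinite for every finite tuple $x_1,\dots,x_n\in X$. The trick is to feed the completely positive map $S_k\otimes \id_n$ the explicit positive rank-one element
\begin{align*}
T=\sum_{i,j=1}^{n}e_{x_ix_j}\otimes E_{ij}\in \B(\ell^2(X))\otimes M_n(\C),
\end{align*}
where $e_{xy}=|\delta_x\rangle\langle\delta_y|$ are the matrix units of $\B(\ell^2(X))$ and $E_{ij}$ are the standard matrix units of $M_n(\C)$. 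A short computation shows that the restriction of $(S_k\otimes\id_n)(T)$ to the span of the diagonal vectors $\{\delta_{x_l}\otimes e_l\}_l$ is precisely the matrix $[k(x_m,x_l)]_{m,l}$, so positivity of $(S_k\otimes\id_n)(T)$ forces positive definiteness of $k$. The Kolmogorov/Aronszajn (GNS) construction then produces the Hilbert space $\h$ and the required map $\xi:X\to\h$, with boundedness of $\xi$ following from $\|\xi(x)\|^2=k(x,x)$ together with boundedness of $k$.

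For the necessity direction of part (1) I would invoke the Wittstock/Paulsen factorization theorem for completely bounded maps: since $S_k:\B(\ell^2(X))\to\B(\ell^2(X))$ has cb-norm strictly less than one, there exist a $*$-representation $\pi:\B(\ell^2(X))\to\B(\mathcal{K})$ and bounded operators $V,W:\ell^2(X)\to\mathcal{K}$ with $\|V\|,\|W\|<1$ satisfying $S_k(T)=V^*\pi(T)W$ for all $T$. Evaluating on matrix units yields $k(x,y)=\langle \pi(e_{xy})W\delta_y,V\delta_x\rangle$. To channel everything through a single subspace, I fix a base point $x_0\in X$ and use the identity $e_{xy}=e_{xx_0}e_{x_0y}$ together with $\pi(e_{xx_0})^*=\pi(e_{x_0x})$ to rewrite this as
\begin{align*}
k(x,y)=\langle \pi(e_{x_0y})W\delta_y,\,\pi(e_{x_0x})V\delta_x\rangle
\end{align*}
inside the common Hilbert space $\h:=\pi(e_{x_0x_0})\mathcal{K}$. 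Setting $\xi(x):=\pi(e_{x_0x})V\delta_x$ and $\eta(y):=\pi(e_{x_0y})W\delta_y$ (again adjusting by complex conjugation or swap to match the inner-product convention in the statement) delivers the factorization $k(x,y)=\langle \xi(x),\eta(y)\rangle$; the bounds $\|\xi(x)\|,\|\eta(y)\|<1$ are inherited from $\|V\|,\|W\|<1$ because the $\pi(e_{x_0x})$ are partial isometries.

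The main obstacle will be the necessity direction of (1): the Wittstock/Paulsen factorization theorem is itself a nontrivial generalization of the Stinespring dilation, and the subsequent extraction of $\xi$ and $\eta$ requires care in exploiting the rigid matrix-unit relations to transport the relevant vectors into a common subspace of the dilation space $\mathcal{K}$. By contrast, part (2) and the sufficiency direction of (1) are essentially explicit computations once the right factorization ansatz is written down.
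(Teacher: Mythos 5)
The paper does not prove this statement at all --- it is quoted verbatim from Pisier's book \cite[Theorem 5.1]{Pis01} --- so there is no internal argument to compare against; your proposal is essentially the standard literature proof, and it is correct. All four directions check out: the sufficiency computations via $S_k(T)=V^*(T\otimes I)W$ (with the conjugate Hilbert space fixing the inner-product convention, as you note); the extraction of positive definiteness of $k$ in part (2), where your $T=\sum_{i,j}e_{x_ix_j}\otimes E_{ij}$ is indeed the rank-one positive element $|u\rangle\langle u|$ for $u=\sum_i \delta_{x_i}\otimes e_i$, and the compression to $\mathrm{span}\{\delta_{x_l}\otimes e_l\}$ of $(S_k\otimes \id_n)(T)=\sum_{i,j}k(x_i,x_j)e_{x_ix_j}\otimes E_{ij}$ is exactly $[k(x_m,x_l)]_{m,l}$, after which Kolmogorov's construction and $\|\xi(x)\|^2=k(x,x)\le\|k\|_\infty$ finish the job; and the Wittstock--Paulsen route in part (1), where the identities $e_{xy}=e_{xx_0}e_{x_0y}$ and $\pi(e_{xx_0})^*=\pi(e_{x_0x})$ correctly transport both vectors into $\pi(e_{x_0x_0})\mathcal{K}$, with the norm bounds inherited because the $\pi(e_{x_0x})$ are contractions. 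One pedantic caveat: in the sufficiency direction of (1), $\|V\|=\sup_x\|\xi(x)\|$ need not be strictly less than $1$ when only the pointwise bounds $\|\xi(x)\|<1$ are assumed, so your conclusion $\|S_k\|_{cb}\le\|V\|\|W\|<1$ requires the strict inequality at the level of suprema; this imprecision is present in the statement as transcribed in the paper (Pisier states the theorem with $\le 1$, or equivalently as the equality $\|S_k\|_{cb}=\inf\,\sup_x\|\xi(x)\|\,\sup_y\|\eta(y)\|$, which is precisely what your argument establishes), and it is harmless for the paper's application, but you should phrase the hypothesis as $\sup_x\|\xi(x)\|<1$ and $\sup_y\|\eta(y)\|<1$ to make the claimed strict bound literally follow.
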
 
Theorem \ref{prop cb cp char} plays an important role to adapt the definition of $C^*$-algebra exactness to discrete groups. We also use this result in the proof of Theorem \ref{thm}.
\subsection{Group exactness}
Similar to amenability, group exactness can be defined in various ways. The most direct definition would be: A discrete countable group is \textit{exact} if its reduced group $C^*$-algebra is exact. In our use, the following one is more convenient.
\begin{defn}\label{defn property A}
A discrete countable group $G$ is \textit{exact} if for every finite subset $E\subseteq G$ and $\epsilon >0$, there exist a finite symmetric subset $F\subseteq G$ and a bounded map $\xi:G\rightarrow \ell^2 (G)_+$ such that 
\begin{enumerate}
\item $\|\xi(x)\|_2=1$ for all $x\in G$,
\item $|1-\langle\xi(x),\xi(y)\rangle|\leq \epsilon$ whenever $x^{-1}y \in E$, 
\item and $\langle \xi(x),\xi(y)\rangle = 0$ whenever  $x^{-1}y \in G\setminus F$.
\end{enumerate}
\end{defn}

\begin{rem}
As explained in \cite[Theorem 4.3.9]{Wil09}, when a discrete countable group $G$ is endowed with its unique (up to coarse equivalence) left-invariant bounded geometry distance, $G$ has Yu's property A if and only if $G$ is exact. Thus the statements in \cite[Theorem 1.2.4]{Wil09} characterize group exactness. Definition 9 occurs while proving the directions Theorem 1.2.4 $(1)\Rightarrow (3)\Rightarrow (7)$. Indeed, the implications Theorem 1.2.4 (3) $\Rightarrow$ Definition \ref{defn property A} $\Rightarrow$  Theorem 1.2.4 (7) can be easily observed. See \cite{BO08} for more characterizations.

The (left $G$-invariant) \textit{tube} of width $S\subseteq G$ is the smallest left $G$-invariant subset $Tube_{l}(S)$ of $G\times G$ containing $\{e\}\times S$. More precisely, 
\begin{align*}
Tube_{l} (S)= \{ (x,y)\in G\times G: x^{-1}y\in S\}.
\end{align*} With this notation, the condition (2) and (3) are equivalent to say that the kernel $(x,y)\in G\times G\mapsto \langle \xi(x),
\xi (y) \rangle$ is uniformly close to 1 on the tube $Tube_l(E)$ and is supported on a tube $Tube_l (F)$ of finite width $F\subseteq G$. One might consider right $G$-invariant tubes by replacing ``$x^{-1}y$" by ``$xy^{-1}$" and produce a similar statement as Definition \ref{defn property A}, but it gives an equivalent statement. To see that, it is enough to consider the map $\check{\xi}:G \rightarrow \ell^2(G)$ defined by $\check{\xi}(x) = \xi(x^{-1})$ for all $x\in G$.
\end{rem}
The following is another way to define group exactness. A remarkable point of this characterization is that it does not necessarily require the kernel to be supported on a tube of finite width and to be positive definite.
\begin{lem}\label{Lem Property A eq def}
A discrete countable group $G$ is exact if and only if there exists a constant $C>0$ such that for every finite subset $I\subseteq G$ and constant $\varepsilon >0$, there exists a kernel $K:G\times G \rightarrow \C$ satisfying
\begin{enumerate}
\item $\|S_K\|_{cb} \leq C$,
\item $|1-K(x,xy)|\leq \varepsilon$ for all $x\in G$ and $y \in I$,
\item $\sum_{y\in G}|K(x,xy)|^2$ converges uniformly for $x\in G$.
\end{enumerate}
\end{lem}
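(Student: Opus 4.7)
\emph{Forward direction $(\Rightarrow)$.} Assume $G$ is exact and fix finite $I \subseteq G$ together with $\varepsilon > 0$. Apply Definition~\ref{defn property A} with $E = I$ to obtain a finite symmetric $F \supseteq I$ and a map $\xi : G \to \ell^2(G)_+$ with $\|\xi(x)\|_2 = 1$ satisfying the three conditions there. Set $K(x, y) := \langle \xi(x), \xi(y)\rangle$. By Theorem~\ref{prop cb cp char}(2), $S_K$ is completely positive; since $S_K(1)$ is the diagonal operator with entries $K(x,x) = 1$, its cb-norm equals $1$, yielding (1) with $C = 1$. Condition (2) is the Definition~\ref{defn property A} estimate applied to $x^{-1}(xy) = y \in I$. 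Condition (3) is automatic: $K(x, xy) = 0$ for $y \notin F$, so the sum has at most $|F|$ nonzero terms, each bounded by $1$.

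\emph{Backward direction $(\Leftarrow)$.} Given finite $E \subseteq G$ and $\varepsilon > 0$ from Definition~\ref{defn property A}, the plan is to select a large finite $I \supseteq E$ and a small $\varepsilon' > 0$, invoke the hypothesis with $(I, \varepsilon')$ to obtain a kernel $K$ satisfying (1)--(3), and then build the required $\xi$. Theorem~\ref{prop cb cp char}(1) furnishes a decomposition $K(x, y) = \langle \alpha(x), \beta(y)\rangle_{\h}$ with $\|\alpha(x)\|, \|\beta(y)\| \leq \sqrt{C}$. Condition (3) allows us to regard $f(x)(z) := K(x, z)$ as an element of $\ell^2(G)$ with uniformly bounded norm, and the kernel $\langle f(x), f(y)\rangle = \sum_z K(x,z)\overline{K(y,z)}$ is manifestly positive definite. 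Passing pointwise to $|f(x)|$ keeps us in $\ell^2(G)_+$ with the same norm; the candidate $\xi : G \to \ell^2(G)_+$ is obtained by normalizing $\xi(x) := |f(x)|/\| |f(x)| \|_2$ and then truncating to a finite symmetric $F$ chosen via condition (3) so that $\sum_{y \notin F} |K(x, xy)|^2 < \delta$ uniformly in $x$ for a sufficiently small $\delta$.

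\emph{Main obstacle.} The central difficulty is verifying $|1 - \langle \xi(x), \xi(xu)\rangle| \leq \varepsilon$ uniformly for $u \in E$. This reduces to estimating $\| |f(x)| - |f(xu)| \|_2^2 = \sum_z (|K(x, z)| - |K(xu, z)|)^2$ relative to $\|f(x)\|_2^2$. On the ``core'' $z \in x(I \cap uI)$, condition (2) with $\varepsilon'$ small makes both $|K(x, z)|$ and $|K(xu, z)|$ close to $1$, so the pointwise error is $O(\varepsilon')$. The subtle region is the ``boundary'' $z \in x(I \triangle uI)$: a naive argument using only cb-boundedness together with the $\ell^2$-tail control from (3) would demand a Folner-type bound $|I \triangle uI| \ll |I|$ for all $u \in E$, which is unavailable for non-amenable exact groups. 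Circumventing this likely requires exploiting the cb decomposition $(\alpha, \beta)$ more carefully---for instance, constructing $\xi(x)$ from $\alpha(x) \in \h$ against an auxiliary orthonormal system indexed by $G$, or interpolating between $|f|$ and $f$ to harness sign cancellations that the pointwise absolute-value construction discards. Once this estimate is in hand, truncation to a finite-width tube and the verification of conditions of Definition~\ref{defn property A} follow routinely from the uniform convergence in (3).
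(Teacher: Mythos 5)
Your forward direction is correct and is exactly the paper's: set $K(x,y)=\langle\xi(x),\xi(y)\rangle$ with $\xi$ from Definition~\ref{defn property A}; complete positivity (Theorem~\ref{prop cb cp char}(2)) gives $\|S_K\|_{cb}=\sup_x\|\xi(x)\|^2=1$, and tube-support makes (3) a finite sum.

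The backward direction has a genuine gap, and you flag it yourself: you try to convert $K$ back into a unit-normalized, positive, tube-supported map $\xi:G\to\ell^2(G)_+$ as in Definition~\ref{defn property A}, and the key estimate $|1-\langle\xi(x),\xi(xu)\rangle|\le\varepsilon$ does not follow from (1)--(3) by anything you write down --- your ``main obstacle'' paragraph is a list of candidate fixes, not an argument, and the F{\o}lner-type obstruction you identify is a real defect of the pointwise-absolute-value-and-normalize construction. The paper sidesteps all of this by \emph{not} returning to Definition~\ref{defn property A}. Instead it verifies the $C^*$-algebraic definition of exactness directly via Proposition~\ref{prop char exactness} (Pisier: $A\subseteq\B(\h)$ is exact iff the inclusion is a pointwise limit of finite-rank maps with uniformly bounded cb-norm), applied to $A=C^*_\rho(G)\subseteq\B(\ell^2(G))$. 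Concretely: choose a finite $J\supseteq I$ with $\sum_{y\in J^c}|K(x,xy)|^2\le 1$ uniformly in $x$ (possible by (3)) and truncate $K$ to the kernel $K_J$ supported on the tube of width $J$. Then $S_{K_J}$ restricted to $C^*_\rho(G)$ has finite rank; $\|S_{K_J}\|_{cb}\le\|S_K\|_{cb}+\sup_x\|(K-K_J)(x,\cdot)\|_2\le C+1$, since a kernel of the form $\langle\eta(x),\delta_y\rangle$ has cb-norm at most $\sup_x\|\eta(x)\|_2$ by Theorem~\ref{prop cb cp char}(1); and condition (2) gives $\|(\rho_y-S_{K_J}(\rho_y))\eta\|_2\le\varepsilon\|\eta\|_2$ for $y\in I$. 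Letting $(I,\varepsilon,J)$ range over a net shows $C^*_\rho(G)\cong C^*_\lambda(G)$ is exact, hence $G$ is exact. I recommend you abandon the reconstruction of $\xi$ and adopt this route; the difficulty you worry about simply does not arise there.
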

This lemma is inspired by its $C^*$-exactness version which we restate below.
\begin{prop}[{\cite[Corollary 17.15]{Pis03}}]\label{prop char exactness}
A $C^*$-algebra $A\subseteq \B(\h)$ is exact if and only if the inclusion $A\rightarrow \B(\h)$ can be approximated pointwise by a net of finite rank maps $u_i:A\rightarrow \B(\h)$ with $\sup_i \|u_i\|_{cb}<\infty$.
\end{prop}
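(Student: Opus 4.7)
Given finite $I \subseteq G$ and $\varepsilon > 0$, apply Definition \ref{defn property A} with $E := I$ to obtain $\xi : G \to \ell^2(G)_+$ with $\|\xi(x)\|_2 = 1$ and a finite symmetric $F$. Set $K(x, y) := \langle \xi(x), \xi(y)\rangle$. This Gram kernel is positive definite, so by Theorem \ref{prop cb cp char}(2) the Schur multiplier $S_K$ is completely positive with $\|S_K\|_{cb} = \sup_x K(x, x) = 1$, giving (1) of the lemma with $C = 1$. Condition (2) is literally Definition \ref{defn property A}(2); condition (3) is immediate since $K(x, xy) = 0$ for $y \notin F$, so the sum has at most $|F|$ nonzero terms each bounded by $1$, with trivially uniform convergence.

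\textbf{Backward direction.} Given $C > 0$ from the hypothesis and target $(E, \varepsilon)$ for Definition \ref{defn property A}, choose small $\delta > 0$ and a large finite $I \supseteq E \cup \{e\}$ (both depending on $\varepsilon$, $C$, $|E|$), and apply the hypothesis to obtain a kernel $K$ with $\|S_K\|_{cb} \leq C$, $|1 - K(x, xy)| \leq \delta$ for $y \in I$, and uniform $\ell^2$-convergence. The plan proceeds in three stages. First, by condition (3) select finite $F \supseteq I$ with $\sum_{y \notin F} |K(x, xy)|^2 < \delta^2$ uniformly in $x$, and truncate to $\tilde K(x, y) := K(x, y)\mathbf{1}_F(x^{-1}y)$, so that $\tilde K$ is supported on a tube of width $F$. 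Second, apply Theorem \ref{prop cb cp char}(1) to factor $K(x, y) = \langle \xi_1(x), \xi_2(y)\rangle_\h$ with $\|\xi_j(x)\| \leq \sqrt{C}$; condition (2) at $y = e$ gives $\langle \xi_1(x), \xi_2(x)\rangle \approx 1$, so by near-equality in Cauchy--Schwarz, $\xi_1(x)$ and $\xi_2(x)$ lie close to a common unit direction in $\h$. Third, convert the factorization into a positive, $\ell^2(G)$-valued family $\xi : G \to \ell^2(G)_+$ with $\|\xi(x)\|_2 = 1$, inner products close to $1$ on $Tube_l(E)$, and supported on $Tube_l(FF^{-1})$.

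\textbf{Main obstacle.} The crux is the third stage. A naive construction $\eta(x)(y) := |\tilde K(x, y)|$ followed by normalization $\xi(x) := \eta(x)/\|\eta(x)\|_2$ already yields Definition \ref{defn property A}(3) (disjoint supports of $\eta(x), \eta(xs)$ once $s \notin F F^{-1}$), but verifying Definition \ref{defn property A}(2) is harder: Cauchy--Schwarz gives only the upper bound $\langle \eta(x), \eta(xs)\rangle \leq \|\eta(x)\|\,\|\eta(xs)\|$, and near-saturation requires $|\tilde K(x, \cdot)| \approx |\tilde K(xs, \cdot)|$ as $\ell^2$-vectors, a form of shift-invariance \emph{not} directly granted by hypotheses (1)--(3). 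Since $G$ is not assumed amenable, one cannot appeal to a F\o{}lner-type overlap between $xF$ and $xsF$. The resolution should use the factorization data of stage two more actively: the pointwise near-equality $\xi_1(x) \approx \xi_2(x)$ in $\h$ (uniform in $x$) ought to transfer to a comparable near-equality of suitably positivized $\ell^2(G)$-realizations, for instance via a symmetrization $\xi(x) \sim (|\xi_1(x)| + |\xi_2(x)|)/2$ in a judiciously chosen orthonormal basis of $\h \subseteq \ell^2(G)$, together with an operator-theoretic polar-type adjustment. Managing the interplay between the complex-valued completely bounded structure delivered by Theorem \ref{prop cb cp char}(1) and the pointwise-positive, unit-$\ell^2$ data demanded by Definition \ref{defn property A} is the main technical challenge, and will dictate the exact choices of $\delta$, $I$, and $F$.
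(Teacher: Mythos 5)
Your proposal does not address the statement in question. The statement is Pisier's characterization of exact $C^*$-algebras: $A \subseteq \B(\h)$ is exact if and only if the inclusion $A \rightarrow \B(\h)$ can be approximated pointwise by finite-rank maps with uniformly bounded cb-norms. The paper does not prove this result at all; it is quoted as a black box from \cite[Corollary 17.15]{Pis03}. A genuine blind proof would have to engage with $C^*$-algebraic exactness itself (exact sequences under the minimal tensor product, nuclear embeddability, local reflexivity), none of which appears in your argument. What you have written is instead a proof attempt of Lemma \ref{Lem Property A eq def} --- the group-theoretic kernel characterization of exactness --- which is a different statement, one level up, whose proof in the paper \emph{uses} Proposition \ref{prop char exactness} as an ingredient.

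Even read charitably as a proof of Lemma \ref{Lem Property A eq def}, the attempt is incomplete. Your forward direction is fine and matches the paper's one-line argument. But your backward direction tries to return to Definition \ref{defn property A}, i.e., to reconstruct a positive, unit-norm, tube-supported $\ell^2(G)$-valued map from the cb-factorization of $K$, and you yourself flag the decisive step --- transferring the near-equality $\xi_1(x) \approx \xi_2(x)$ in the abstract Hilbert space $\h$ to a pointwise-positive $\ell^2(G)$-realization --- as an unresolved ``main obstacle.'' The speculation about symmetrization and polar-type adjustments is not a proof, and the difficulty you identify (no shift-invariance of $|\tilde K(x,\cdot)|$, no amenability to supply F\o lner overlaps) is genuine; it is precisely why the paper never takes this route. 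The paper's proof of the ``if'' direction goes the opposite way, downward into operator algebras: truncate $K$ to $K_J$ supported on a tube of finite width $J$ chosen so that $\sum_{y \in J^c} |K(x,xy)|^2 \leq 1$ uniformly; observe that $S_{K_J} : C^*_\rho(G) \rightarrow \B(\ell^2(G))$ has finite rank; bound $\|S_{K_J}\|_{cb} \leq C + 1$ via the estimate $\|S_K - S_{K_J}\|_{cb} \leq \sup_x \|(K - K_J)(x,\cdot)\|_2$; verify $\|(\rho_y - S_{K_J}(\rho_y))\xi\|_2 \leq \varepsilon \|\xi\|_2$ from condition (2); and then invoke Proposition \ref{prop char exactness} for $A = C^*_\rho(G)$ to conclude exactness. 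This bypasses entirely the positivization problem on which your plan founders --- but it only works because Proposition \ref{prop char exactness} is taken as given, which is exactly what you were asked to prove and did not.
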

\begin{proof}[Proof of Lemma \ref{Lem Property A eq def}]
The ``only if" part is done by considering the positive definite kernels of the form $K(x,y) = \langle \xi(x),\xi(y) \rangle$ where $\xi$ is as in Definition \ref{defn property A}. For the ``if" part, we approximate $K$ by kernels that are supported on  tubes of finite width and apply Proposition \ref{prop char exactness} for $A = C^*_\rho (G)\subseteq \B(\ell^2(G))$.

Take any $(I,\varepsilon)$ and fix $K$ as in the statement. For every large enough finite subset $J\subseteq G$ such that $I\subseteq J$ and $\sum_{y\in J^c}|K(x,xy)|^2\leq 1$ for all $x\in G$, we define the kernel $K_{I,\varepsilon,J} = K_J$ on $G$ by truncating $K$ on the tube of width $J$. More precisely, put
\begin{align*}
K_J(x,y) = \left\{\begin{array}{lr}
        K(x,y), & \text{if } x^{-1}y\in J\\
        0, & \text{otherwise}.
        \end{array}\right.
\end{align*} for all $x,y\in G$. Then the Schur multiplier $S_{K_J}: C^*_\rho (G) \rightarrow \B (\ell^2 (G))$ has finite rank. 

Observe that $(K-K_J)(x,\cdot)\in \ell^2(G)$ and $
(K-K_J)(x,y) = \langle (K-K_J)(x,\cdot),\delta_y \rangle$ for all $x,y\in G$. It follows that
\begin{align*}
\|S_{K_J}\|_{cb} &\leq \|S_K\|_{cb} + \|S_K-S_{K_J}\|_{cb} \leq C + \sup_{x\in G} \|(K-K_J)(x,\cdot)\|_2 
\\
&= C + \left( \sum_{y\in J^c}|K(x,xy)|^2\right )^{1/2} \leq C+1.
\end{align*}

Finally, let us check the pointwise approximation condition for the net $(S_{K_{I,\varepsilon,J}})$. Take any $y\in G$. As $I$ grows, we can assume that $y\in I$. In that case, we have
\begin{align*}
 \left \|\left (\rho_y  - S_{K_J} (\rho_y)\right )\xi\right \|_2= \left \|\sum_{x\in G} (1 - K(x,xy)) \xi (x) \delta_{xy}\right \|_2\leq \varepsilon \|\xi\|_2\quad \text{for all} \quad \xi \in \ell^2(G).
\end{align*} This completes the proof by Proposition \ref{prop char exactness}.
\end{proof}

\section{Proof of Theorem \ref{thm}}\label{section proof}
The proof of Theorem \ref{thm} runs until the end of this section. Suppose that we have an action $\sigma : \Gamma\times \Lambda \rightarrow \Aut (M,\Tr)$ that establishes $\Gamma$ as a $vNE$-subgroup of $\Lambda$. Let $p$ and $q$ be fundamental domains for $\Lambda$ and $\Gamma$ actions, respectively. We assume that the trace is normalized so that $\Tr (p)=1$. Assume that the group $\Lambda$ is exact. Take any finite subset $I\subseteq \Gamma$ and constant $\varepsilon >0$. 
Choose a large enough finite subset $E\subseteq \Lambda$ such that 
\begin{align}\label{choice of E}
 \sum_{s\in E^c} \Tr (\sigma_{\beta} (p)\sigma_{s}(p))\leq \varepsilon^2/16 \quad \text{for all} \quad \beta\in I,
\end{align}
 and choose $\epsilon = \dfrac{\varepsilon^2}{16|E|}$. By the exactness of $\Lambda$, we can find a bounded map $\xi : \Lambda\rightarrow \ell^2(\Lambda)_+$ as in Definition \ref{defn property A}. Put $\xi^s (t)=\xi_t (s^{-1}) = \langle \xi(ts),\xi(t)\rangle$ for all $s,t\in \Lambda$. Recall that $\xi^s\equiv 0$ for all $s\in \Lambda\setminus F$ and that $\supp (\xi_t)\subseteq F$ for all $t\in \Lambda$, where $F\subseteq \Lambda$ is a finite symmetric subset. 
The \textit{induced map} $\hat{\xi}:\Gamma\rightarrow L^2(M,\Tr)\otimes_2 \ell^2(\Lambda)$ and the  \textit{induced kernel} $\hat{k}:\Gamma\times \Gamma\rightarrow \C$ are given by
\begin{align}
\begin{aligned}
&\hat{\xi}(\gamma) = \sum_{s\in \Lambda} \sigma_{(\gamma,s)}(p)p\otimes \xi (s)
\\
&\hat{k}(\gamma,\beta) = \langle \hat{\xi}(\gamma),\hat{\xi}(\beta)\rangle
\end{aligned}
\end{align} for all $\gamma,\beta\in\Gamma$. 
(The induction method above is from \cite{Ish21}. It generalizes the induction method introduced in \cite[Lemma 2.1]{Haa16} for the case where $\Lambda$ is a lattice of a locally compact group, and in \cite[p.3]{Oz12} and \cite[Lemma 2.1]{Jol15} for the case where $\Gamma$ and $\Lambda$ are measure equivalent.) Since we have $\|\hat{\xi}\|_2 \leq 1$,the induced kernel is  positive definite and $\|\hat{k}\|_{cb} \leq 1$  by Proposition \ref{prop cb cp char}.

For each finite subset $S\subseteq \Gamma$, denote 
\begin{align*}
q_S = \sum_{\gamma\in S} \sigma_{\gamma} (q)\quad \text{and}\quad p_S = pq_S.
\end{align*}
The \textit{induced map} $\hat{\xi}_S:\Gamma\rightarrow L^2(M,\Tr)\otimes_2 \ell^2(\Lambda)$ and the \textit{induced kernel} $\hat{k}_S:\Gamma\times \Gamma\rightarrow \C$ \textit{associated to $S$} are given by 
\begin{align}
\begin{aligned}
&\hat{\xi}_S(\gamma) =  \sum_{s\in \Lambda} \sigma_{(\gamma,s)}(p_S)p\otimes \xi (s)
\\
&\hat{k}_S(\gamma,\beta) = \langle \hat{\xi}_S(\gamma),\hat{\xi}(\beta)\rangle.
\end{aligned}
\end{align} for all $\gamma, \beta \in \Gamma$
We will prove that for a large enough $S$ the kernel $\hat{k}_S$ satisfies all conditions of Lemma \ref{Lem Property A eq def}. We will need the following lemma.
\begin{lem}\label{Lem conv}
The following statements are true.
\begin{enumerate}
\item $p_S \rightarrow p$ in $L^2(M,\Tr)$ as $S$ goes to $\Gamma$.
\item $\hat{\xi}_S(\gamma) \rightarrow \hat{\xi}(\gamma)$ in $L^2(M,\Tr)\otimes_2 \ell^2(\Lambda)$ uniformly for $\gamma\in \Gamma$ as $S$ goes to $\Gamma$.
\item $\hat{k}_S \rightarrow \hat{k}$ with respect to the cb-norm (hence also uniformly on $\Gamma\times \Gamma$) as $S$ goes to $\Gamma$.
\end{enumerate}
\end{lem}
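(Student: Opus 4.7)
The plan is to prove (1) by a short $L^2$ computation, then use orthogonality coming from the $\Lambda$-fundamental domain structure to establish the sharp identity $\|\hat\xi(\gamma) - \hat\xi_S(\gamma)\|^2 = \|p - p_S\|_2^2$ in (2), and finally read off (3) from (2) via Theorem \ref{prop cb cp char}(1). The main obstacle I anticipate is the identity in (2): making the $\gamma$-dependence disappear completely, which is what yields \emph{uniform} convergence, requires simultaneously using orthogonality of the $\Lambda$-translates of $p$, $\Tr$-invariance of the action, and normality of $\Tr$.

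For (1), use $p - p_S = p(1 - q_S)$. Since the $\sigma_\beta(q)$ are pairwise orthogonal projections, $q_S$ is itself a projection, and cyclicity of $\Tr$ with $(1 - q_S)^2 = 1 - q_S$ gives
\[
\|p - p_S\|_2^2 \;=\; \Tr\bigl((1 - q_S)\, p\, (1 - q_S)\bigr) \;=\; \Tr\bigl(p\, (1 - q_S)\bigr) \;=\; 1 - \Tr(p q_S).
\]
The net $\{p q_S p\}_S$ increases in $M_+$ to $p$ in the strong operator topology, so normality of $\Tr$ forces $\Tr(p q_S) = \Tr(p q_S p) \uparrow 1$ and hence $\|p - p_S\|_2 \to 0$.

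For (2), set $e_s := \sigma_{(\gamma, s)}(p)$; these are pairwise orthogonal projections summing to $1$ because $p$ is a $\Lambda$-fundamental domain. Since $\sigma_{(\gamma, s)}(p - p_S) = e_s\, \sigma_{(\gamma, s)}(1 - q_S)$ and $e_s e_{s'} = 0$ for $s \neq s'$, the vectors $\sigma_{(\gamma, s)}(p - p_S)\, p$ are pairwise orthogonal in $L^2(M, \Tr)$ as $s$ varies over $\Lambda$. Expanding $\xi(s)$ in the standard basis of $\ell^2(\Lambda)$ and using $\|\xi(s)\|_2 = 1$, a Pythagorean identity produces
\[
\|\hat\xi(\gamma) - \hat\xi_S(\gamma)\|^2 \;=\; \sum_{s \in \Lambda} \bigl\|\sigma_{(\gamma, s)}(p - p_S)\, p\bigr\|_2^2 \;=\; \sum_{s \in \Lambda} \Tr\bigl(\sigma_{(\gamma, s)}((1 - q_S) p (1 - q_S))\, p\bigr).
\]
Shifting $\sigma_{(\gamma, s)}$ to the other factor by $\Tr$-invariance and then using $\sum_{s \in \Lambda} \sigma_{(\gamma^{-1}, s^{-1})}(p) = 1$ (justified by normality of $\Tr$), the right-hand side collapses to $\Tr((1 - q_S)\, p\, (1 - q_S)) = \|p - p_S\|_2^2$, independently of $\gamma$, and (1) delivers the uniform convergence.

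For (3), the difference factors as $\hat k(\gamma, \beta) - \hat k_S(\gamma, \beta) = \langle \hat\xi(\gamma) - \hat\xi_S(\gamma),\, \hat\xi(\beta)\rangle$, which is already in the inner-product form required by Theorem \ref{prop cb cp char}(1). Combined with the uniform bound $\|\hat\xi(\beta)\| \leq 1$, this yields
\[
\|S_{\hat k - \hat k_S}\|_{cb} \;\leq\; \sup_{\gamma \in \Gamma} \|\hat\xi(\gamma) - \hat\xi_S(\gamma)\| \cdot \sup_{\beta \in \Gamma} \|\hat\xi(\beta)\| \;\leq\; \|p - p_S\|_2 \longrightarrow 0
\]
by (1) and (2); pointwise uniform convergence of $\hat k_S$ on $\Gamma \times \Gamma$ is an immediate consequence.
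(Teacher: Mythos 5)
Your proposal is correct, and parts (1) and (3) follow the paper's argument essentially verbatim: (1) is the normality-of-$\Tr$ observation that $q_S\to 1$ in SOT forces $\|p(q_S-1)\|_2\to 0$, and (3) is exactly the paper's factorization $(\hat{k}-\hat{k}_S)(\gamma,\beta)=\langle(\hat\xi-\hat\xi_S)(\gamma),\hat\xi(\beta)\rangle$ fed into Theorem \ref{prop cb cp char}(1). Where you genuinely diverge is part (2). The paper expands $\|(\hat\xi-\hat\xi_S)(\gamma)\|_2^2$ as a double sum over $s,t\in\Lambda$, reindexes $s=ts'$, and then uses the fact that the kernel $\xi^s(t)=\langle\xi(ts),\xi(t)\rangle$ is supported on the \emph{finite} symmetric set $F$ to reduce to a finite sum bounded by $\sum_{s\in F}\|\xi^s\|_\infty\,\|p-p_S^*\|_2\|p-p_S\|_2$. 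You instead exploit that the projections $\sigma_{(\gamma,s)}(p)$, $s\in\Lambda$, are pairwise orthogonal (being projections summing to $1$), so that the summands $\sigma_{(\gamma,s)}(p-p_S)p\otimes\xi(s)$ are already orthogonal in the first tensor factor; together with $\|\xi(s)\|_2=1$, $\Tr$-invariance, and normality this collapses the sum to the exact identity $\|(\hat\xi-\hat\xi_S)(\gamma)\|^2=\|p-p_S\|_2^2$. Your route is sharper and arguably cleaner: it gives equality rather than an upper bound, makes the $\gamma$-independence (hence uniformity) transparent, and does not use the finite-support condition on $\xi$ at all — only the normalization $\|\xi(s)\|_2=1$ — so it would survive in settings where $\xi$ is not supported on a tube of finite width. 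The paper's estimate, by contrast, is of a piece with the computations it needs later in Section \ref{section proof} (where the sets $E$ and $F$ genuinely enter), which is presumably why the author phrases it that way. Both arguments are complete; the only cosmetic point is that in your Pythagorean step the expansion of $\xi(s)$ in the standard basis of $\ell^2(\Lambda)$ is unnecessary, since orthogonality in the $L^2(M,\Tr)$ factor alone kills the cross terms.
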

\begin{proof}
Since $q_S\rightarrow 1$ in SOT, we have $\|p_S-p\|_2 = \|p(q_S-1)\|_2\rightarrow 0.$
For the second statement, since the set $F$ is finite, we have
\begin{align*}
\left \|(\hat{\xi}-\hat{\xi}_S)(\gamma)\right \|_2^2 &= \sum_{t\in\Lambda}\sum_{s\in\Lambda} \Tr(p \sigma_{(\gamma,t)}(p-p_S^*) \sigma_{(\gamma,s)}(p-p_S))\langle {\xi}(s),{\xi}(t)\rangle
\\
&=\sum_{t\in\Lambda}\sum_{s\in F} \Tr(p \sigma_{(\gamma,t)}(p-p_S^*) \sigma_{(\gamma,ts)}(p-p_S))\langle {\xi}(ts),{\xi}(t)\rangle
\\
&=\sum_{s\in F} \Tr(\sigma_\gamma^{-1}(\theta_p(\xi^s))(p-p_S^*) \sigma_{s}(p-p_S))
\\
&\leq \sum_{s\in F} \|\xi^s\|_\infty \|p-p_S^*
\|_2 \|p-p_S\|_2 \rightarrow 0.
\end{align*}
For the third statement, since $(\hat{k}-\hat{k}_S)(\gamma,\beta) = \langle (\hat{\xi}-\hat{\xi}_S)(\gamma),\hat{\xi}(\beta) \rangle$ for all $\gamma,\beta
\in \Gamma$, the completely bounded norm $\|\hat{k}-\hat{k}_S\|_{cb}$ is bounded by $\sup_{\gamma\in\Gamma} \|(\hat{\xi}-\hat{\xi}_S)(\gamma)\|_2$ which goes to zero by the second statement.
\end{proof}
A direct corollary of Lemma \ref{Lem conv} is that the completely bounded norm $\|\hat{k}_S\|_{cb}$ is bounded by $2$ for all large enough $S$. This checks the first condition of Lemma \ref{Lem Property A eq def} for $\hat{k}_S$. Let us check the second condition of Lemma \ref{Lem Property A eq def} for $\hat{k}_S$. Since $\hat{k}$ and $\hat{k}_S$ are uniformly close, it is enough to check that $|1-\hat{k}(\gamma,\gamma\beta)|\leq 3\varepsilon/4$ for all $\gamma\in \Gamma$ and $\beta\in I$. Put $p_\beta = \sum_{u\in E}p\sigma_{(\beta^{-1},u)}(p)$. The choice (\ref{choice of E}) of $E$ implies that
\begin{align}\label{p-pbeta}
\|p-p_\beta\|_2^2 = \Tr((p-p_\beta)(p-p_\beta^*)) =\sum_{u\in E^c}\Tr(\sigma_\beta (p)\sigma_u(p)) \leq \varepsilon^2/16.
\end{align} By the triangle inequality, we have
\begin{align*}
|1-\hat{k}(\gamma,\gamma\beta)| 
\leq \left |1-\Tr(p_\beta)\right | &+ \left |\Tr(p_\beta) - \sum_{t\in \Lambda}\sum_{s\in F} \Tr(\sigma_{(\gamma,t)}^{-1}(p) \sigma_\beta(p_\beta) \sigma_s(p)) \langle \xi(ts),\xi(t)\rangle\right | 
\\
&+ \left |\sum_{t\in \Lambda}\sum_{s\in F} \Tr(\sigma_{(\gamma,t)}^{-1}(p) \sigma_\beta(p_\beta) \sigma_s(p)) \langle \xi(ts),\xi(t)\rangle -\hat{k}(\gamma,\gamma\beta)\right |.
\end{align*}
By (\ref{p-pbeta}), the first  term is bounded as 
\begin{align*}
|1-\Tr(p_\beta)| = |\Tr(p(p-p_\beta))| \leq \|p\|_2\|p-p_\beta\|_2 \leq \varepsilon/4.
\end{align*} The middle term is also bounded by $\varepsilon/4$. Indeed,
\begin{align*}
&\text{Middle term} = \sum_{s\in \Lambda} \sum_{t\in \Lambda}\Tr(\sigma_{(\gamma,t)}^{-1}(p) \sigma_\beta(p_\beta) \sigma_s(p)) \left (1-\langle \xi(ts),\xi(t)\rangle\right) 
\\
&=\sum_{s\in \Lambda} \Tr(\sigma_{\gamma}^{-1}(\theta_p(1-\xi^s)) \sigma_\beta(p_\beta) \sigma_s(p))
\\
&=\sum_{s\in E} \Tr(\sigma_{\gamma}^{-1}(\theta_p(1-\xi^s)) \sigma_\beta(p_\beta) \sigma_s(p))
\\
&\leq \left (\sum_{s\in E}\Tr(\sigma_s(p) \sigma_{\gamma}^{-1}(\theta_p(|1-\xi^s|^2))\sigma_s(p) )\right )^{1/2}\left (\sum_{s\in E}\Tr(\sigma_s(p)\sigma_\beta(p_\beta^* p_\beta) \sigma_s(p))\right )^{1/2}
\\
&\leq \left (\sum_{s\in E} \|1-\xi^s\|_\infty \Tr(\sigma_s(p) )\right )^{1/2}\left (\Tr(p_\beta^* p_\beta)\right )^{1/2}
\\
&\leq \left (\epsilon |E|\right )^{1/2} \leq \varepsilon /4.
\end{align*}
Let us estimate the last term.
\begin{align*}
(\text{Last term})^2&= \left |\left \langle \hat{\xi}(\gamma), \sum_{t\in \Lambda} \sigma_{(\gamma\beta,t)} (p_\beta - p)p\otimes \xi(t)\right \rangle\right |^2
\\
&\leq \left \|\sum_{t\in \Lambda} \sigma_{(\gamma\beta,t)} (p_\beta - p)p\otimes \xi(t)\right \|^2
\\
&=\sum_{s\in F} \Tr \left ( \sigma_{(\gamma\beta,t)}^{-1}(\theta_p (\xi^s)) \sigma_s (p_\beta^*-p) (p_\beta -p) \right )
\\
&=\sum_{s\in F} \Tr \left ( \sigma_{(\gamma\beta,t)}^{-1}(\theta_p (\xi^s)) \sigma_s \left (\sum_{u\in E} \sigma_{(\beta^{-1},u)} (p)p-p\right ) \left (\sum_{v\in E} p\sigma_{p(\beta^{-1},v)} (p)-p\right ) \right )
\\
&= \Tr \left (\left (\sum_{u\in E} \sigma_{(\beta^{-1},u)} (p)p-p\right ) \left (\sum_{v\in E} p\sigma_{(\beta^{-1},v)} (p)-p\right ) \right )
\\
&= \|p_\beta - p\|_2^2 \leq \varepsilon^2/16.
\end{align*}
Therefore, for a large enough $S\subseteq \Gamma$, we have $\|\hat{k}_S\|_{cb}\leq 2$ and $|1-\hat{k}_S (\gamma,\gamma\beta)|\leq \varepsilon$ for all $\gamma\in \Gamma$ and $\beta\in I$. It remains to prove the last condition of Lemma \ref{Lem Property A eq def} for $\hat{k}_S$. To see that, observe that
\begin{align*}
&|\hat{k}_S (\gamma,\gamma\beta)| = \left |\sum_{t\in \Lambda}\sum_{s\in F} \Tr \left (\sigma_{(\gamma,t)}^{-1} (p) \sigma_\beta (p_S)\sigma_s(p)\right ) \langle \xi(ts),\xi(t)\rangle \right |
\\
&=\left |\sum_{t\in \Lambda} \Tr \left (\sigma_{(\gamma,t)}^{-1} (p) \sigma_\beta (p_S)\theta_p(\xi_t)\right ) \right |
\\
&=\left |\sum_{t\in \Lambda} \Tr \left (\left [\sigma_{(\gamma,t)}^{-1} (p) \sigma_\beta (p_S)\theta_p(\xi_t^{1/2})\right ]\left [\theta_p(\xi_t^{1/2})\sigma_{(\gamma,t)}^{-1} (p) )\right ]\right ) \right |
\\
&\leq \left ( \sum_{ t\in \Lambda} \Tr \left (
\sigma_{(\gamma,t)}^{-1} (p)\sigma_{\beta} (p_S)\theta_p(\xi_t)\sigma_{\beta} (p_S^*)
\sigma_{(\gamma,t)}^{-1} (p)
\right )\right)^{1/2}    \left ( \sum_{t\in \Lambda} \Tr \left ( \sigma_{(\gamma,t)}^{-1}(p)\theta_p(\xi_t)
\sigma_{(\gamma,t)}^{-1}(p)
\right )\right)^{1/2}
\\
&\leq \left ( \sum_{ t\in \Lambda} \sum_{s\in F}\Tr \left (
\sigma_{(\gamma,t)}^{-1} (p)\sigma_{\beta} (p_S)\sigma_s(p)\sigma_{\beta} (p_S^*)
\sigma_{(\gamma,t)}^{-1} (p)
\right )\right)^{1/2}    \left ( \sum_{t\in \Lambda}\sum_{s\in F} \Tr \left ( \sigma_{(\gamma,t)}^{-1}(p)\sigma_s(p)
\sigma_{(\gamma,t)}^{-1}(p)
\right )\right)^{1/2}
\\
&\leq \left ( \sum_{ s\in F} \Tr \left (
\sigma_{\beta} (p_S^*p_S)\sigma_s(p)
\right )\right)^{1/2}  |F|^{1/2}.
\end{align*}
It follows that
\begin{align*}
\sum_{\beta\in\Gamma}\sup_{\gamma\in\Gamma}|\hat{k}_S (\gamma,\gamma\beta)|^2 
&\leq {|F|} \sum_{ s\in F}  \sum_{ \beta\in \Gamma} \Tr \left (
\sigma_{\beta} (p_S^*p_S)\sigma_s(p)
\right )
\\
&\leq {|F|}\sum_{ s\in F}  \sum_{ \beta\in \Gamma} \Tr \left (
\sigma_{\beta} (q_S)\sigma_s(p)
\right )
\\
&\leq {|F|} \sum_{ s\in F}  \sum_{\alpha\in S}\sum_{ \beta\in \Gamma} \Tr \left (
\sigma_{\beta\alpha} (q)\sigma_s(p)
\right )
\\
&= |F|^2 |S| < \infty.
\end{align*}
This completes the proof.

\section*{Acknowledgements}
The author is supported by JSPS fellowship program (P21737). The author is sincerely grateful to Narutaka Ozawa for the hospitality at the Research Institute for Mathematical Sciences (RIMS), Kyoto University, and for the fruitful discussions. The author is also grateful to Ignacio Vergara for bringing von Neumann equivalence to the author's attention.

\bibliographystyle{amsalpha}
\bibliography{mybibfile}
\end{document}